\documentclass[leqno,12pt]{article} %leqno is the option to put formula numbers on the left side
\setlength{\textheight}{21cm}
\setlength{\textwidth}{16cm}
\setlength{\oddsidemargin}{0cm}
\setlength{\evensidemargin}{0cm}
\setlength{\topmargin}{0cm}
\usepackage{amsmath, amssymb}
\usepackage{amsthm} %theorem environment option
%
 %changing the interline spacing
 %footnote counter
%
%%%%%%%%% Theorem-like environments %%%%%%%%%%%
%
\theoremstyle{plain} %text of this environment is typesetted in italics
\newtheorem{theorem}{\indent\sc Theorem}[section]
\newtheorem{lemma}[theorem]{\indent\sc Lemma}
\newtheorem{corollary}[theorem]{\indent\sc Corollary}
\newtheorem{proposition}[theorem]{\indent\sc Proposition}

\theoremstyle{definition} %text of this environment is typesetted in roman letters
\newtheorem{definition}[theorem]{\indent\sc Definition}
\newtheorem{remark}[theorem]{\indent\sc Remark}
\newtheorem{example}[theorem]{\indent\sc Example}

%
%If a theorem-like environment should not be numbered,
%add * after \newtheorem, and delete the counter option such as [theorem].

%
%%%%% Proof %%%%%

%The following commands are available in the proof environment:
%\begin{proof}
%\end{proof}
%The end of a proof is marked with a square.
%%%%%%%%%%%%%%%%%%%%%%%%%%%%%%%%%%%%%%%%%
\makeatletter
%The following command is available for address and e-mail address:
%\address{<address>}{<E-mail address>}
%%%%%%%% definition of "\address" command %%%%%%%%%%%
%
%%%%%%%%%%
\makeatother
%%%%%%%%%%%%%%%%%%%%%%%%%%%%%%%%%%%%%%%%%
%
\title{On some special symmetries \\ of a biwarped product-type $3$-manifold}
%title of the paper
%
\author{Adara M. Blaga}

\date{}
%
%%%%%%%%%%%%%%%%%%%%%%%%%%%%%%%%%%%%%%%%%
\pagestyle{myheadings}

\begin{document}

\maketitle

\markboth{{\small\it {\hspace{1cm} On some special symmetries of a biwarped product-type $3$-manifold}}}{\small\it{On some special symmetries of a biwarped product-type $3$-manifold\hspace{1cm}}}

%%%%%%%%%%%%%%% footnote %%%%%%%%%%%%%%%%
\footnote{ %2010 MSC numbers
2010 \textit{Mathematics Subject Classification}.
53B25, 53B50.
}
\footnote{ %key words and phrases
\textit{Key words and phrases}.
Biwarped product manifold; Killing vector field; Diagonal metric; Riemannian geometry.
}

\begin{abstract}
We investigate special Killing vector fields on $3$-dimensional Riemannian manifolds of biwarped product-type. Starting from a diagonal metric on $\mathbb R^3$ determined by two nontrivial warping functions and a constant scaling factor, we derive the system of equations characterizing Killing fields and provide a description of their structure. Families of solutions are obtained, depending on the expressions and on the relations between the warping functions, including explicit examples of both warped and biwarped product cases. These results continue recent work on symmetries of manifolds with diagonal metrics.
\end{abstract}

\section{Preliminaries}

Killing vector fields play a central role in differential geometry and mathematical physics within the study of Riemannian and pseudo-Riemannian manifolds. By definition, a Killing vector field is a vector field that preserves the metric tensor under the flow it generates. Equivalently, it is a solution of the Killing equation, which expresses the vanishing of the Lie derivative of the metric. This condition formalizes the notion of an infinitesimal isometry: the flow of a Killing vector field moves points on the manifold in such a way that distances and angles (measured by the metric) remain unchanged.

The existence of Killing vector fields is deeply tied to the symmetry structure of the manifold. In Riemannian geometry, they describe continuous groups of isometries, such as rotations and translations in Euclidean space, or the symmetries of a sphere or hyperbolic space. These symmetries allow one to reduce geometric and analytic problems to simpler forms. For instance, the Laplacian or geodesic equations often admit conserved quantities associated with Killing fields, which can facilitate explicit calculations.

In pseudo-Riemannian geometry, especially in Lorentzian manifolds such as those used in general relativity, Killing vector fields acquire a more concrete and profound physical significance. A timelike Killing vector corresponds to time translation symmetry, and leads to the conservation of energy along geodesics. A spacelike Killing vector associated with spatial translations or rotations yields conservation of momentum or angular momentum, respectively. These conservation laws are important in analyzing particle motion, gravitational fields, and spacetime structures. For example, the Schwarzschild spacetime possesses both a timelike and a rotational Killing vector field, reflecting the static and spherically symmetric nature of the black hole solution, and leading directly to conserved energy and angular momentum for test particles.

Moreover, the presence of Killing fields provides insights into the global geometry and topology of a manifold. Their algebra, given by the Lie bracket of vector fields, corresponds to the Lie algebra of the isometry group, which is an invariant of the geometry. In physics, this algebra underpins the classification of spacetimes by symmetry, a key tool in both cosmology and black hole theory. Roughly speaking, Killing vector fields are not only elegant geometric objects but also indispensable tools in applications. They encode symmetry, yield conserved quantities, simplify equations, and reveal deep structural properties of both mathematical spaces and physical models of the universe.

Warped product manifolds occupy a central position in differential geometry and mathematical physics because they provide a method to construct new manifolds with controlled curvature properties. A warped product is built from two Riemannian (or pseudo-Riemannian) manifolds: a base manifold and a fiber manifold, together with a positive smooth function called the warping function. The metric of the product is defined in such a way that the fiber is scaled differently at each point of the base, thereby "warping" the product geometry. This construction generalizes the direct product of manifolds and allows for much richer geometric structures.

One of the primary reasons warped products are important is that they give explicit models of manifolds with desired curvature properties. For example, spheres, hyperbolic spaces, and de Sitter or anti-de Sitter spacetimes can all be realized as warped products. The curvature tensor of a warped product admits an explicit formula in terms of the warping function and the curvatures of the base and fiber, which makes these manifolds highly tractable for both theoretical and applied studies.

Warped products also play a major role in general relativity. Many physically relevant solutions of Einstein's field equations are expressed naturally as warped products. The classical Robertson--Walker spacetimes used in cosmology to model an expanding or contracting universe are warped products, with the scale factor serving as the warping function. Similarly, the Schwarzschild solution, describing the geometry outside a non-rotating spherically symmetric mass, can be interpreted as a warped product between a radial--temporal plane and a $2$-sphere. This perspective allows one to understand the causal and geometric structure of spacetimes in a systematic way.

Beyond relativity, warped product manifolds arise in other areas of mathematics. They appear in the study of submanifold geometry, in the classification of Einstein metrics, and in global analysis, where their special structure allows explicit computations of Laplace and Dirac operators. Furthermore, warped products are useful in geometric flows and comparison geometry, where curvature bounds can be derived or modeled using warped product structures. Basically, warped product manifolds provide a powerful framework for constructing and analyzing spaces with controlled curvature and symmetry. Their applications extend from pure geometry to fundamental models of the physical universe, making them an indispensable tool in both mathematics and physics.

Biwarped product manifolds extend the classical notion of warped products by allowing two distinct warping functions acting on two fiber manifolds over a common base. This construction is a natural generalization of warped products, and it significantly enlarges the class of manifolds that can be studied in both pure and applied geometry.
By introducing two independent warping functions, one gains the ability to model spaces where different geometric or physical components evolve at different rates. This is relevant in the study of curvature properties: explicit formulas for the Riemannian curvature tensor of biwarped products can be obtained in terms of the base, fibers, and warping functions. Such formulas allow us to construct new examples of Einstein manifolds, manifolds with constant scalar curvature, or spaces satisfying other special geometric conditions.

Applications appear mainly in general relativity and cosmology. Biwarped products can be used to describe spacetimes with multiple evolving spatial sections, where the expansion of one sector is governed by one warping function and another sector by a different one. This makes them natural candidates for models of anisotropic cosmological universes, where different spatial dimensions expand at unequal rates. For instance, certain Bianchi-type spacetimes and higher-dimensional cosmological models can be interpreted within the biwarped product framework.

In addition, biwarped product manifolds have applications in theoretical physics beyond relativity, such as string theory and higher-dimensional gravity. In these contexts, extra spatial dimensions often require different scaling behaviors, and biwarped products provide an elegant geometric language to encode such structures.

From a purely mathematical perspective, the study of submanifolds, harmonic maps, and geometric flows on biwarped products is an active area of research. Their structure enables explicit computations that would be intractable in more general settings. Furthermore, biwarped products enrich the classification theory of product-type manifolds and provide new examples in the interplay between geometry and topology. As biwarped product manifolds generalize warped products in a natural way, they offer a new tool for constructing and analyzing spaces with diverse curvature and symmetry properties, and they constitute appropriate models in modern mathematical physics, especially in the geometry of spacetime and higher-dimensional theories.

We briefly recall the definitions of a warped product and biwarped product manifold.

\begin{definition}[Bishop and O'Neill, 1969]
Let $(M_1,g_1)$ and $(M_2,g_2)$ be two pseudo-Riemannian manifolds. The \textit{warped product manifold} $M_1 \!\times_{{f}}\!M_2$ is defined as
$$\left(M_1 \times M_2, \ \pi_{1}^{*}(g_1)+(\pi_1^*(f))^2 \pi_{2}^{*}(g_2)\right),$$
where $\pi_{i}^{*}$ is the pullback map via the canonical projection $\pi_{i}$ from $M_1 \times M_2$ onto $M_i$, for $i\in\{1,2\}$, and $f$ is a smooth positive real function defined on $M_1$ called the \emph{warping function}.
A warped product manifold is said to be \textit{non-trivial} if $f$ is not a constant function. If $f$ is constant, then the manifold is just a direct product manifold (and we call it the trivial case).
\end{definition}

\begin{definition}[N\"{o}lker, 1996]
Let $(M_1,g_1)$, $(M_2,g_2)$, and $(M_3,g_3)$ be three pseudo-Riemannian manifolds. The \textit{biwarped product manifold} $M_1 \!\times_{{f_1}}\!M_2 \!\times_{{f_2}}\!M_3$ is defined as
$$\left(M_1 \times M_2\times M_3, \ \pi_{1}^{*}(g_1)+(\pi_1^*(f_1))^2 \pi_{2}^{*}(g_2)+(\pi_1^*(f_2))^2 \pi_{3}^{*}(g_3)\right), $$
where $\pi_{i}^{*}$ is the pullback map via the canonical projection $\pi_{i}$ from $M_1 \times M_2\times M_3$ onto $M_i$, for $i\in\{1,2,3\}$, and $f_1$ and $f_2$ are two smooth positive real functions defined on $M_1$ called the \emph{warping functions}.
If only one of $f_1$ and $f_2$ is constant, then the manifold is a warped product manifold. Moreover, if both $f_1$ and $f_2$ are constant, then the manifold is a direct product manifold (and we call it the trivial case).
\end{definition}

The aim of the present paper is to determine certain symmetries of $\mathbb R^3$ endowed with a Riemannian metric that slightly extends the biwarped product metric, completing some results from \cite{bl24} and \cite{adara}. Moreover, we provide examples of Killing vector fields on a warped and biwarped product $3$-dimensional manifold.

\section{Killing vector fields}

We consider now a Riemannian metric ${g}$ on $\mathbb R^3$ given by
\begin{equation*}
{g}=\frac{1}{f_1^2}dx^1\otimes dx^1+\frac{1}{f_2^2}dx^2\otimes dx^2+\frac{1}{k_3^2}dx^3\otimes dx^3,
\end{equation*}
where $x^1,x^2,x^3$ stand for the standard coordinates in $\mathbb R^3$, $f_1$ and $f_2$ are smooth functions nowhere zero on $\mathbb R^3$ depending only on $x^3$, and $k_3\in \mathbb R\setminus \{0\}$. Let
$$\Big\{E_1:=f_1\frac{\partial}{\partial x^1}, \ \ E_2:=f_2\frac{\partial}{\partial x^2}, \ \ E_3:=k_3\frac{\partial}{\partial x^3}\Big\}$$
be a local orthonormal frame.
Then, the Levi-Civita connection $\nabla$ of $g$ is given by (see \cite{balr}):
$$\nabla_{E_1}E_1=k_3\frac{f_1'}{f_1}E_3, \ \ \nabla_{E_2}E_2=k_3\frac{f_2'}{f_2}E_3, \ \ \nabla_{E_3}E_3=0,$$
$$\nabla_{E_1}E_2=0, \ \ \nabla_{E_2}E_3=-k_3\frac{f_2'}{f_2}E_2, \ \ \nabla_{E_3}E_1=0,$$
$$\nabla_{E_1}E_3=-k_3\frac{f_1'}{f_1}E_1, \ \ \nabla_{E_3}E_2=0, \ \ \nabla_{E_2}E_1=0.$$

We recall that a vector field $V$ tangent to $\mathbb R^3$ is called a \textit{Killing vector field} \cite{killing} if the Lie derivative $\pounds$ of the metric $g$ in the direction of $V$ vanishes, i.e.,
$$(\pounds_Vg)(X,Y):=V(g(X,Y))-g([V,X],Y)-g(X,[V,Y])=0$$
for any tangent vector fields $X,Y$ to $\mathbb R^3$.

Let $V=\sum_{k=1}^3V^kE_k$, with $V^k$, $k\in \{1,2,3\}$, smooth functions on $\mathbb R^3$. Then,
\begin{align*}
(\pounds_Vg)(E_i,E_j)&=E_i(V^j)+E_j(V^i)+\sum_{k=1}^3V^k\{g(\nabla_{E_i}E_k,E_j)+g(E_i,\nabla_{E_j}E_k)\}
\end{align*}
for any $i,j\in \{1,2,3\}$, which is equivalent to the following system
\begin{equation}\label{s0}
\left\{
    \begin{aligned}
&(\pounds_Vg)(E_1,E_1)=2\left\{E_1(V^1)-k_3\frac{f_1'}{f_1}V^3\right\}\\
&(\pounds_Vg)(E_2,E_2)=2\left\{E_2(V^2)-k_3\frac{f_2'}{f_2}V^3\right\}\\
&(\pounds_Vg)(E_3,E_3)=2E_3(V^3)\\
&(\pounds_Vg)(E_1,E_2)=E_1(V^2)+E_2(V^1)\\
&(\pounds_Vg)(E_2,E_3)=E_2(V^3)+E_3(V^2)+k_3\frac{f_2'}{f_2}V^2\\
&(\pounds_Vg)(E_3,E_1)=E_3(V^1)+E_1(V^3)+k_3\frac{f_1'}{f_1}V^1
    \end{aligned}
  \right. \ ,
\end{equation}
and we have

\begin{lemma}
If $f_1=f_1(x^3)$, $f_2=f_2(x^3)$, $f_3=k_3\in \mathbb R\setminus\{0\}$, then the vector field $V=\sum_{k=1}^3V^kE_k$ is a Killing vector field on $(\mathbb R^3,g)$ if and only if
\begin{equation}\label{sms}
\left\{
    \begin{aligned}
&f_1\frac{\displaystyle \partial V^1}{\displaystyle \partial x^1}=k_3\frac{\displaystyle f_1'}{\displaystyle f_1}V^3\\
&f_2\frac{\displaystyle \partial V^2}{\displaystyle \partial x^2}=k_3\frac{\displaystyle f_2'}{\displaystyle f_2}V^3\\
&\frac{\displaystyle \partial V^3}{\displaystyle \partial x^3}=0\\
&f_1\frac{\displaystyle \partial V^2}{\displaystyle \partial x^1}=-f_2\frac{\displaystyle \partial V^1}{\displaystyle \partial x^2}\\
&f_2\frac{\displaystyle \partial V^3}{\displaystyle \partial x^2}+k_3\frac{\displaystyle \partial V^2}{\displaystyle \partial x^3}+k_3\frac{\displaystyle f_2'}{\displaystyle f_2}V^2=0\\
&k_3\frac{\displaystyle \partial V^1}{\displaystyle \partial x^3}+f_1\frac{\displaystyle \partial V^3}{\displaystyle \partial x^1}+k_3\frac{\displaystyle f_1'}{\displaystyle f_1}V^1=0
    \end{aligned}
  \right. \ .
\end{equation}
\end{lemma}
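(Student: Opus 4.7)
The plan is to start from the defining identity that $V$ is Killing if and only if $(\pounds_V g)(X,Y)=0$ for all tangent vector fields $X,Y$, and to observe that, since $\{E_1,E_2,E_3\}$ is a local orthonormal frame, the Killing condition is equivalent to the vanishing of the six scalar functions listed in system \eqref{s0}, indexed by the pairs $(i,j)$ with $i\le j$. The general formula
\[
(\pounds_V g)(E_i,E_j)=E_i(V^j)+E_j(V^i)+\sum_{k=1}^{3}V^k\{g(\nabla_{E_i}E_k,E_j)+g(E_i,\nabla_{E_j}E_k)\}
\]
is already derived in the text, and the Levi-Civita connection on $\{E_1,E_2,E_3\}$ has been computed; all that remains is to translate each of the six scalar equations into the coordinate form appearing in \eqref{sms}.

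The key bookkeeping step is the conversion between frame derivatives and partial derivatives. Since $E_1=f_1\partial_{x^1}$, $E_2=f_2\partial_{x^2}$, $E_3=k_3\partial_{x^3}$, I would record once and for all the identities
\[
E_1(V^j)=f_1\frac{\partial V^j}{\partial x^1},\qquad E_2(V^j)=f_2\frac{\partial V^j}{\partial x^2},\qquad E_3(V^j)=k_3\frac{\partial V^j}{\partial x^3},
\]
and then substitute these into the six lines of \eqref{s0}. Setting the first three diagonal entries to zero immediately produces the first three equations of \eqref{sms} (note in particular that the third one uses $k_3\ne 0$ to divide out, giving $\partial V^3/\partial x^3=0$). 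Setting the three off-diagonal entries to zero, and collecting the connection coefficients that appear via $\nabla_{E_i}E_j$, reproduces the remaining three equations of \eqref{sms} verbatim.

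Since both implications are obtained simultaneously (the equivalence of $\pounds_V g=0$ with the componentwise vanishing in an orthonormal frame is a tautology, and each scalar equation is just an algebraic rewriting of the corresponding line in \eqref{s0}), no nontrivial analytic argument is needed. The only place where one must be mildly careful is in keeping track of the signs coming from $\nabla_{E_1}E_3=-k_3(f_1'/f_1)E_1$ and $\nabla_{E_2}E_3=-k_3(f_2'/f_2)E_2$, which are what produce the $+k_3(f_i'/f_i)V^i$ terms in the last two equations of \eqref{sms} rather than minus signs; this is the main, and essentially only, obstacle to avoid, and it reduces to checking which connection terms actually contribute in $g(\nabla_{E_i}E_k,E_j)+g(E_i,\nabla_{E_j}E_k)$ once the table of Levi-Civita coefficients is in hand.
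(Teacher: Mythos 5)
Your proposal is correct and is essentially the paper's own (implicit) argument: the Lemma is obtained exactly by substituting $E_1=f_1\frac{\partial}{\partial x^1}$, $E_2=f_2\frac{\partial}{\partial x^2}$, $E_3=k_3\frac{\partial}{\partial x^3}$ into the six entries of \eqref{s0}, setting them to zero, and dividing by $k_3\neq 0$ in the third one, which is all the paper does when it says \eqref{sms} follows immediately from \eqref{s0}. One bookkeeping correction to your final remark: the $+k_3\frac{f_i'}{f_i}V^i$ terms in the last two equations of \eqref{sms} arise from $\nabla_{E_2}E_2=k_3\frac{f_2'}{f_2}E_3$ and $\nabla_{E_1}E_1=k_3\frac{f_1'}{f_1}E_3$ through $g(\nabla_{E_2}E_2,E_3)$, resp.\ $g(\nabla_{E_1}E_1,E_3)$, whereas $\nabla_{E_i}E_3=-k_3\frac{f_i'}{f_i}E_i$ is what produces the $-k_3\frac{f_i'}{f_i}V^3$ terms in the first two (diagonal) entries of \eqref{s0} --- a harmless misattribution here, since you take \eqref{s0} as given, but it would matter if you rechecked that system from the connection table.
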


\begin{proposition}\label{pj1}
Let $f_1=f_1(x^3)$, $f_2=f_2(x^3)$, $f_3=k_3\in \mathbb R\setminus\{0\}$. Then, a vector field $V=\sum_{k=1}^3V^kE_k$ is a Killing vector field on $(\mathbb R^3,g)$ if and only if
one of the following four assertions holds:

(i) \begin{equation*}
V^3=c\in \mathbb R;
\end{equation*}

(ii) \begin{equation*}
\left\{
    \begin{aligned}
&V^3=V^3(x^1)\\
&(V^3)'\neq 0\\
&\frac{\displaystyle f_1f_2'}{\displaystyle f_2^2}=c\in \mathbb R
    \end{aligned}
  \right. \ ;
\end{equation*}

(iii) \begin{equation*}
\left\{
    \begin{aligned}
&V^3=V^3(x^2)\\
&(V^3)'\neq 0\\
&\frac{\displaystyle f_1'f_2}{\displaystyle f_1^2}=c\in \mathbb R
    \end{aligned}
  \right. \ ;
\end{equation*}

(iv) \begin{equation*}
\left\{
    \begin{aligned}
&V^3=V^3(x^1,x^2)\\
&\frac{\displaystyle \partial V^3}{\displaystyle \partial x^1}\neq 0, \ \frac{\displaystyle \partial V^3}{\displaystyle \partial x^2}\neq 0\\
&\frac{\displaystyle f_1f_2'}{\displaystyle f_2^2}=c_1\in \mathbb R, \ \frac{\displaystyle f_1'f_2}{\displaystyle f_1^2}=c_2\in \mathbb R
    \end{aligned}
  \right. \ .
\end{equation*}
\end{proposition}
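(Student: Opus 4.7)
The plan is to start from the system (\ref{sms}) characterizing Killing fields that the Lemma provides, and organize the argument by the functional dependence of $V^3$. The third equation of (\ref{sms}), $\partial V^3/\partial x^3=0$, immediately shows $V^3=V^3(x^1,x^2)$, which partitions the possibilities into the four cases stated: $V^3$ constant (case (i)), a genuine function of $x^1$ only (case (ii)), of $x^2$ only (case (iii)), or depending nontrivially on both (case (iv)).

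Case (i) is immediate: no further obstruction on $f_1,f_2$ is produced, and the remaining equations only constrain $V^1,V^2$. In the nontrivial cases, the key idea is to use the fifth and sixth equations of (\ref{sms}) to integrate the $x^3$-dependence of $V^2$ and $V^1$, and then to use the first, second, and fourth equations to separate the warping-function dependence from the multivariate component dependence. Concretely, in case (ii) with $V^3=V^3(x^1)$, the fifth equation reduces (because $\partial V^3/\partial x^2=0$) to $\partial(f_2 V^2)/\partial x^3=0$, which gives $V^2=h(x^1,x^2)/f_2(x^3)$. A parallel integration of the sixth equation produces the analogous expression for $V^1$, involving a term in $\int f_1^2\,dx^3$ and an integration function $H(x^1,x^2)$. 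Substituting these back into the first, second, and fourth equations and separating the $x^3$-dependent factors from the $(x^1,x^2)$-dependent factors forces the scalar relation $f_1 f_2'/f_2^2=c$. Case (iii) follows by the evident symmetry interchanging the roles of $(f_1,x^1)$ and $(f_2,x^2)$, which sends the fifth equation to the sixth and vice versa. Case (iv) is obtained by running both arguments in parallel: since both partials of $V^3$ are now nonzero, the constraint comes from the compatibility of the integrated $V^1$ with equation two and of the integrated $V^2$ with equation one, producing the two conditions $f_1 f_2'/f_2^2=c_1$ and $f_1'f_2/f_1^2=c_2$ simultaneously.

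The main technical difficulty is the separation-of-variables step. Since the $x^3$-integrating factors of the fifth and sixth equations depend only on $x^3$, while the remaining freedom in $V^1,V^2$ depends on $(x^1,x^2)$, one has to equate two different expressions for the same partial derivative of $V^1$ or $V^2$ to deduce that certain $x^3$-dependent combinations of $f_1,f_2$ are forced to be constants. Carefully tracking which cross-terms survive, and which vanish because $\partial V^3/\partial x^i=0$ in a given subcase, is what isolates the precise scalar invariants $c$, $c_1$, $c_2$ appearing in the statement; this bookkeeping is where most of the work lies, the rest being routine integration of linear ODEs in $x^3$.
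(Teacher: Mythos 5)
Your overall strategy (split into cases by the dependence of $V^3$, integrate the fifth and sixth equations of \eqref{sms} in $x^3$, substitute into the first, second and fourth equations, separate variables) can in principle be pushed through, but the crucial step is misidentified as written, and in case (iv) the mechanism you name is not coherent. The point is that the invariants $f_1f_2'/f_2^2$ and $f_1'f_2/f_1^2$ mix the two warping functions, whereas after your integrations each equation you propose to ``separate'' involves only one of them. In case (ii), substituting $V^2=h(x^1,x^2)/f_2(x^3)$ into the second equation of \eqref{sms} gives $\partial h/\partial x^2=k_3(f_2'/f_2)(x^3)\,V^3(x^1)$, whose separation forces $f_2'/f_2$ constant --- a condition on $f_2$ alone, not the stated invariant. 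The relation $f_1f_2'/f_2^2=c$ only emerges after you couple the two blocks through the fourth equation, which with your integrations reads $(f_1/f_2)^2\,\partial h/\partial x^1=-\partial H/\partial x^2$ and hence forces $(f_1/f_2)^2$ constant unless $\partial h/\partial x^1\equiv 0$, a subcase which (using $(V^3)'\neq0$) leads to $f_2'\equiv0$, i.e. $c=0$. So the conclusion is reachable, but not by the single separation you describe, and your route in fact yields strictly stronger conditions than those listed --- a sign the computation was not actually traced. In case (iv) the claim that the constraints come from ``compatibility of the integrated $V^1$ with equation two and of the integrated $V^2$ with equation one'' cannot work as stated: the second equation does not contain $V^1$ and the first does not contain $V^2$; moreover, with $\partial V^3/\partial x^2\neq0$ the fifth equation no longer integrates to $f_2V^2=h(x^1,x^2)$ but to $f_2V^2=-\tfrac{1}{k_3}\bigl(\int f_2^2\bigr)\partial V^3/\partial x^2+h(x^1,x^2)$, and the resulting identities relate $\int f_2^2$ with $f_2'/f_2$ (respectively $\int f_1^2$ with $f_1'/f_1$), again containing no mixed invariant unless the fourth equation is brought in.

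The paper avoids all of this case-by-case bookkeeping with one computation valid in every case: equating the mixed partials of $V^3(x^1,x^2)$, computed from the fifth and sixth equations and combined with the fourth, gives $\partial_{x^3}\bigl(f_2\,\partial V^1/\partial x^2\bigr)=0$, hence a function $K(x^1,x^2):=f_2\,\partial V^1/\partial x^2=-f_1\,\partial V^2/\partial x^1$ as in \eqref{s2}--\eqref{s3}; differentiating these and using the first two equations of \eqref{sms} yields $\partial K/\partial x^1=k_3\,(f_1'f_2/f_1^2)\,\partial V^3/\partial x^2$ and $\partial K/\partial x^2=-k_3\,(f_1f_2'/f_2^2)\,\partial V^3/\partial x^1$, and since the left-hand sides are independent of $x^3$ the four assertions (i)--(iv) drop out at once. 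You should either adopt a device of this kind that couples $V^1$ and $V^2$ through the fourth equation before splitting into cases, or carry out in full the subcase analysis indicated above. Finally, note that you do not address the converse implication at all; the paper only asserts it (``by a direct computation''), but your write-up should at least state what is to be checked there.
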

\begin{proof}
\eqref{sms} follows immediately from \eqref{s0}.

The 3rd equation of \eqref{sms} implies that $V^3=V^3(x^1,x^2)$. Expressing now its derivatives from the 6th and the 5th equations of \eqref{sms}, we infer that
\begin{equation}\label{s1}
\left\{
    \begin{aligned}
&\frac{\displaystyle \partial V^3}{\displaystyle \partial x^1}(x^1,x^2)=-k_3\left(\frac{\displaystyle f_1'}{\displaystyle f_1^2}\right)(x^3)V^1(x^1,x^2,x^3)-k_3\frac{\displaystyle 1}{\displaystyle f_1(x^3)}\frac{\displaystyle \partial V^1}{\displaystyle \partial x^3}(x^1,x^2,x^3)\\
&\frac{\displaystyle \partial V^3}{\displaystyle \partial x^2}(x^1,x^2)=-k_3\left(\frac{\displaystyle f_2'}{\displaystyle f_2^2}\right)(x^3)V^2(x^1,x^2,x^3)-k_3\frac{\displaystyle 1}{\displaystyle f_2(x^3)}\frac{\displaystyle \partial V^2}{\displaystyle \partial x^3}(x^1,x^2,x^3)
    \end{aligned}
  \right. \ .
\end{equation}
Now, differentiating the 1st relation from \eqref{s1} with respect to $x^2$, the 2nd one with respect to $x^1$, equalizing them, and using the 4th equation of \eqref{sms}, we get
$$\frac{\displaystyle \partial}{\displaystyle \partial x^3}\left(f_2\frac{\displaystyle \partial V^1}{\displaystyle \partial x^2}(x^1,x^2,\cdot)\right)(x^3)=0,$$
which implies that
\begin{equation}\label{s2}
f_2(x^3)\frac{\displaystyle \partial V^1}{\displaystyle \partial x^2}(x^1,x^2,x^3)=K(x^1,x^2),
\end{equation}
where $K=K(x^1,x^2)$, and further that
\begin{equation}\label{s3}
f_1(x^3)\frac{\displaystyle \partial V^2}{\displaystyle \partial x^1}(x^1,x^2,x^3)=-K(x^1,x^2),
\end{equation}
by means of the 4th equation of \eqref{sms}.
Differentiating \eqref{s2} with respect to $x^1$, \eqref{s3} with respect to $x^2$, and using the 1st and the 2nd equations of \eqref{sms}, we obtain
\begin{equation}\label{s4}
\left\{
    \begin{aligned}
&\frac{\displaystyle \partial K}{\displaystyle \partial x^1}(x^1,x^2)=k_3\left(\frac{\displaystyle f_1'f_2}{\displaystyle f_1^2}\right)(x^3)\frac{\displaystyle \partial V^3}{\displaystyle \partial x^2}(x^1,x^2)\\
&\frac{\displaystyle \partial K}{\displaystyle \partial x^2}(x^1,x^2)=-k_3\left(\frac{\displaystyle f_1f_2'}{\displaystyle f_2^2}\right)(x^3)\frac{\displaystyle \partial V^3}{\displaystyle \partial x^1}(x^1,x^2)
    \end{aligned}
  \right. \ ,
\end{equation}
and we deduce the following four possible cases:
\begin{equation}\label{s5}
\left\{
    \begin{aligned}
&\frac{\displaystyle \partial V^3}{\displaystyle \partial x^1}(x^1,x^2)=0\\
&\frac{\displaystyle \partial V^3}{\displaystyle \partial x^2}(x^1,x^2)=0
    \end{aligned}
  \right. \ ;
\end{equation}
\begin{equation}\label{s6}
\left\{
    \begin{aligned}
&\frac{\displaystyle \partial V^3}{\displaystyle \partial x^1}(x^1,x^2)=0\\
&\frac{\displaystyle \partial V^3}{\displaystyle \partial x^2}(x^1,x^2)\neq 0 \\
&\frac{\displaystyle f_1'f_2}{\displaystyle f_1^2}=c_1\in \mathbb R
    \end{aligned}
  \right. \ ;
\end{equation}
\begin{equation}\label{s7}
\left\{
    \begin{aligned}
&\frac{\displaystyle \partial V^3}{\displaystyle \partial x^1}(x^1,x^2) \neq 0\\
&\frac{\displaystyle \partial V^3}{\displaystyle \partial x^2}(x^1,x^2)= 0 \\
&\frac{\displaystyle f_1f_2'}{\displaystyle f_2^2}=c_2\in \mathbb R
    \end{aligned}
  \right. \ ;
\end{equation}
\begin{equation}\label{s8}
\left\{
    \begin{aligned}
&\frac{\displaystyle \partial V^3}{\displaystyle \partial x^1}(x^1,x^2)\neq 0\\
&\frac{\displaystyle \partial V^3}{\displaystyle \partial x^2}(x^1,x^2)\neq 0 \\
&\frac{\displaystyle f_1'f_2}{\displaystyle f_1^2}=c_1\in \mathbb R\\
&\frac{\displaystyle f_1f_2'}{\displaystyle f_2^2}=c_2\in \mathbb R
    \end{aligned}
  \right. \ .
\end{equation}

By a direct computation, we find that the converse implication also holds true, hence, the proof is complete.
\end{proof}

\begin{remark}
Some examples of non-constant real-valued functions $f_1$ and $f_2$ that satisfy the condition
$$\frac{f_1'f_2}{f_1^2}=c \in \mathbb R\setminus \{0\}$$
are: (i) $f_1(t)=e^t$, $f_2(t)=ce^t$; (ii) $f_1(t)=t$, $f_2(t)=ct^2$ (on an open interval not containing $0$); (iii) $f_1(t)=\sin(t)$, $f_2(t)=c\displaystyle\frac{\sin^2(t)}{\cos(t)}$ (on an open interval not containing $k\pi$ nor $\displaystyle\frac{\pi}{2}+k\pi$ for any integer number $k$).
\end{remark}

\begin{remark}
Under the hypotheses of Proposition \ref{pj1}, we notice that $V^3$ can not depend on $x^3$.
\end{remark}

Now, we will consider the cases when one of the component functions of the vector field is constant, and we prove the following results.

\begin{theorem}\label{tea}
Let $f_1=f_1(x^3)$, $f_2=f_2(x^3)$, $f_3=k_3\in \mathbb R\setminus\{0\}$. Then, a vector field $V=\sum_{k=1}^3V^kE_k$ with $V^1=c_1\in \mathbb R$ is a Killing vector field on $(\mathbb R^3,g)$ if and only if one of the following assertions holds:

(A) \begin{equation*}
\left\{
    \begin{aligned}
&V^1=0\\
&V^2(x^3)=\frac{\displaystyle c}{\displaystyle f_2(x^3)}\\
&V^3=0
\end{aligned}
  \right. \ , \ c\in \mathbb R;
\end{equation*}

(B) \begin{equation*}
\left\{
    \begin{aligned}
&V^1=c_1\\
&V^2(x^3)=\frac{\displaystyle c_2}{\displaystyle f_2(x^3)}\\
&V^3=0
\end{aligned}
  \right. \ , \ c_1\in \mathbb R\setminus\{0\},c_2\in \mathbb R,
\end{equation*}
and $f_1=k_1\in \mathbb R\setminus\{0\}$;

(C) $f_1=k_1\in \mathbb R\setminus\{0\}$, $\frac{\displaystyle f_2''f_2-(f_2')^2}{\displaystyle f_2^4}=k\in \mathbb R$, and,
according to the sign of $k$, we consequently have:

\hspace{0.5cm} (a) $k=0$ and 
\begin{equation*}
\left\{
    \begin{aligned}
&V^1=c_1\\
&V^2(x^2,x^3)=\frac{\displaystyle k_3\bar{c}}{\displaystyle \tilde{c}}(ax^2+b)e^{-\bar{c}x^3}\\
&V^3=a
\end{aligned}
  \right. \ , \ a,b,c_1,\bar{c}\in \mathbb R, \tilde{c}\in \mathbb R\setminus\{0\}, 
\end{equation*}

\hspace{0.5cm} (b) $k<0$ and 
\begin{equation*}
\left\{
    \begin{aligned}
&V^1=c_1\\
&V^2(x^2,x^3)=-\frac{\displaystyle \sqrt{-k}}{\displaystyle k}\left(\frac{\displaystyle f_2'}{\displaystyle f_2^2}\right)(x^3)\left(a_1e^{k_3\sqrt{-k}x^2}-a_2e^{-k_3\sqrt{-k}x^2}\right)\\
&V^3(x^2)=a_1e^{k_3\sqrt{-k}x^2}+a_2e^{-k_3\sqrt{-k}x^2}
\end{aligned}
  \right. \ , \ a_1,a_2,c_1\in \mathbb R,
\end{equation*}

\hspace{0.5cm} (c) $k>0$ and 
\begin{equation*}
\left\{
    \begin{aligned}
&V^1=c_1\\
&V^2(x^2,x^3)=\frac{\displaystyle \sqrt{k}}{\displaystyle k}\left(\frac{\displaystyle f_2'}{\displaystyle f_2^2}\right)(x^3)\left(a_1\sin({k_3\sqrt{k}x^2})-a_2\cos({k_3\sqrt{k}x^2})\right)\\
&V^3(x^2)=a_1\cos({k_3\sqrt{k}x^2})+a_2\sin({k_3\sqrt{k}x^2})
\end{aligned}
  \right. \ , \ a_1,a_2,c_1\in \mathbb R.
\end{equation*}
\end{theorem}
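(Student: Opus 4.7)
The plan is to substitute $V^1 = c_1$ into the Killing system \eqref{sms} and propagate this restriction through the remaining five equations. The first equation reduces immediately to $k_3(f_1'/f_1)V^3 = 0$, so the analysis splits at the outset into two alternatives: either $V^3 \equiv 0$, or $f_1' \equiv 0$, i.e., $f_1 = k_1$ is constant.

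In the first alternative, $V^3 \equiv 0$ forces the sixth equation to become $c_1 f_1' = 0$, distinguishing case (A), where $c_1 = 0$ with $f_1$ unrestricted, from case (B), where $c_1 \ne 0$ necessarily forces $f_1 = k_1$ constant. In both sub-cases the remaining equations 2, 4, and 5 collapse to $\partial V^2/\partial x^1 = \partial V^2/\partial x^2 = 0$ together with $\partial(f_2 V^2)/\partial x^3 = 0$, yielding $V^2 = c/f_2(x^3)$ as stated.

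The substantial case is (C), in which $f_1 = k_1$ is constant but $V^3$ is allowed to be nonzero. Equation 6 now gives $\partial V^3/\partial x^1 = 0$, and combined with equation 3 this forces $V^3 = V^3(x^2)$; equation 4 then gives $\partial V^2/\partial x^1 = 0$, so $V^2 = V^2(x^2, x^3)$. Equations 2 and 5 become
\begin{equation*}
\frac{\partial V^2}{\partial x^2} = k_3\frac{f_2'}{f_2^2}(x^3)\,V^3(x^2), \qquad \frac{\partial V^2}{\partial x^3} = -\frac{f_2'}{f_2}(x^3)\,V^2 - \frac{f_2(x^3)}{k_3}(V^3)'(x^2).
\end{equation*}
The crucial step is to impose equality of the mixed partial derivatives of $V^2$ derived from these two relations; after a short simplification one obtains
\begin{equation*}
(V^3)''(x^2) = -k_3^2\, V^3(x^2)\,\frac{f_2''f_2 - (f_2')^2}{f_2^4}(x^3).
\end{equation*}
Since the left side depends only on $x^2$ and $V^3 \not\equiv 0$, the quantity $k := (f_2''f_2 - (f_2')^2)/f_2^4$ must be a real constant, and $V^3$ satisfies the linear second-order ODE $(V^3)'' + k_3^2 k V^3 = 0$.

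Solving this ODE according to the sign of $k$ produces the three explicit sub-cases (a)--(c): a linear/constant function when $k = 0$, a linear combination of real exponentials when $k < 0$, and a linear combination of $\sin$ and $\cos$ when $k > 0$. To recover $V^2$ I would integrate the first displayed relation, writing $V^2 = k_3(f_2'/f_2^2)(x^3)\,P(x^2) + \psi(x^3)$, where $P$ is an antiderivative of $V^3$, and then determine $\psi$ from the second relation, which reduces to an ODE of the form $(f_2\psi)' = -\gamma f_2^2/k_3$ with $\gamma$ constant. The remaining integration constants combine with the free additive constant in $P$ to produce precisely the parameters listed in the theorem. The main obstacle is the mixed-partial calculation, since it must simultaneously extract the ODE for $V^3$ and establish the constancy of $k$; everything afterwards is a routine classification of solutions of a linear ODE followed by an integration. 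The converse direction is verified by direct substitution of each family into \eqref{sms}, which is straightforward once the expressions for $V^1, V^2, V^3$ are in hand.
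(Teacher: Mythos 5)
Your proposal is correct and follows essentially the same route as the paper: the same initial dichotomy from $f_1'V^3=0$ (leading to cases (A), (B) via $c_1f_1'=0$), the same mixed-partial compatibility argument giving $(V^3)''=-k_3^2\,k\,V^3$ with $k=\frac{f_2''f_2-(f_2')^2}{f_2^4}$ necessarily constant, the same classification by the sign of $k$, and the converse by direct substitution. The only cosmetic difference is in recovering $V^2$: you integrate the $\partial V^2/\partial x^2$ equation first and fix $\psi(x^3)$ from the other relation, whereas the paper integrates $\partial(f_2V^2)/\partial x^3=-\frac{f_2^2}{k_3}(V^3)'$ first and then determines $M_2(x^2)$ — the same technique in the opposite order.
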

\begin{proof}
In this case, \eqref{sms} becomes
\begin{equation}\label{sms1}
\left\{
    \begin{aligned}
&f_1'V^3=0\\
&\frac{\displaystyle \partial V^2}{\displaystyle \partial x^2}=k_3\frac{\displaystyle f_2'}{\displaystyle f_2^2}V^3\\
&\frac{\displaystyle \partial V^3}{\displaystyle \partial x^3}=0\\
&\frac{\displaystyle \partial V^2}{\displaystyle \partial x^1}=0\\
&f_2\frac{\displaystyle \partial V^3}{\displaystyle \partial x^2}+k_3\frac{\displaystyle \partial V^2}{\displaystyle \partial x^3}+k_3\frac{\displaystyle f_2'}{\displaystyle f_2}V^2=0\\
&\frac{\displaystyle \partial V^3}{\displaystyle \partial x^1}=-k_3c_1\frac{\displaystyle f_1'}{\displaystyle f_1^2}
    \end{aligned}
  \right. \ .
\end{equation}
From the 4th and the 3rd equations of \eqref{sms1}, we deduce that
$$V^2=V^2(x^2,x^3), \ \ V^3=V^3(x^1,x^2),$$
and the 1st equation of the same system implies that either ($i_1$) ($V^3=0$) or ($i_2$) ($f_1=k_1\in \mathbb R\setminus\{0\}$).

($i_1$) If $V^3=0$, from \eqref{sms1} we get
\begin{equation}\label{sms1a}
\left\{
    \begin{aligned}
&V^2=V^2(x^3)\\
&(f_2V^2)'=0\\
&c_1f_1'=0
    \end{aligned}
  \right. \ ;
\end{equation}
therefore, $V^2(x^3)=\frac{\displaystyle c_2}{\displaystyle f_2(x^3)}$, $c_2\in \mathbb R$, and we deduce the following possible cases: (${i_1}_a$) ($c_1=0$) and (${i_1}_b$) ($c_1\neq 0$ and $f_1=k_1\in \mathbb R\setminus\{0\}$).

In the 1st case, (${i_1}_a$), we have $V^1=0$.

In the 2nd case, (${i_1}_b$), we have $V^1=c_1\in \mathbb R\setminus\{0\}$ and $f_1=k_1\in \mathbb R\setminus\{0\}$.

($i_2$) If $f_1=k_1\in \mathbb R\setminus\{0\}$, from \eqref{sms1} we get
\begin{equation}\label{sms1b}
\left\{
    \begin{aligned}
&V^3=V^3(x^2)\\
&\frac{\displaystyle \partial V^2}{\displaystyle \partial x^2}=k_3\frac{\displaystyle f_2'}{\displaystyle f_2^2}V^3\\
&\frac{\displaystyle \partial V^2}{\displaystyle \partial x^3}=-\frac{\displaystyle f_2'}{\displaystyle f_2}V^2-\frac{f_2}{k_3}(V^3)'
    \end{aligned}
  \right. \ .
\end{equation}
By derivating the 2nd and the 3rd equations of \eqref{sms1b} with respect to $x^3$ and $x^2$ respectively, and equalizing them, we get
\begin{equation}\label{cc}
(V^3)''(x^2)=-k_3^2\left(\frac{\displaystyle 1}{\displaystyle f_2}\cdot\left(\frac{\displaystyle f_2'}{\displaystyle f_2^2}\right)'+\left(\frac{\displaystyle f_2'}{\displaystyle f_2^2}\right)^2\right)(x^3)V^3(x^2),
\end{equation}
and we deduce the following possible cases: (${i_2}_a$) ($V^3=0$) and (${i_2}_b$) ($\frac{\displaystyle 1}{\displaystyle f_2}\cdot\left(\frac{\displaystyle f_2'}{\displaystyle f_2^2}\right)'+\left(\frac{\displaystyle f_2'}{\displaystyle f_2^2}\right)^2=k\in \mathbb R$).

In the 1st case, (${i_2}_a$), we get
\begin{equation*}
\left\{
    \begin{aligned}
&V^2=V^2(x^3)\\
&(f_2V^2)'=0
    \end{aligned}
  \right. \ ;
\end{equation*}
therefore, $V^2(x^3)=\frac{\displaystyle c_2}{\displaystyle f_2(x^3)}$, $c_2\in \mathbb R$.

In the 2nd case, (${i_2}_b$), we have
\begin{equation*}
\left\{
    \begin{aligned}
&\frac{\displaystyle 1}{\displaystyle f_2}\cdot\left(\frac{\displaystyle f_2'}{\displaystyle f_2^2}\right)'+\left(\frac{\displaystyle f_2'}{\displaystyle f_2^2}\right)^2=k\\
&(V^3)''=-k_3^2kV^3
    \end{aligned}
  \right. \ .
\end{equation*}
The 1st condition is equivalent to
$$\frac{\displaystyle f_2''f_2-(f_2')^2}{\displaystyle f_2^4}=k.$$
From the 2nd equation, we deduce the following possible cases: 

(${{i_2}_b}_1$) ($k=0$, hence, $V^3(x^2)=a_1x^2+a_2$, $a_1,a_2\in \mathbb R$), 

(${{i_2}_b}_2$) ($k<0$, hence, $V^3(x^2)=a_1e^{k_3\sqrt{-k}x^2}+a_2e^{-k_3\sqrt{-k}x^2}$, $a_1,a_2\in \mathbb R$), 

(${{i_2}_b}_3$) ($k>0$, hence, $V^3(x^2)=a_1\cos(k_3\sqrt{k}x^2)+a_2\sin(k_3\sqrt{k}x^2)$, $a_1,a_2\in \mathbb R$), \\
and $V^2$ satisfies
\begin{equation}\label{s4}
\left\{
    \begin{aligned}
&\frac{\displaystyle \partial V^2}{\displaystyle \partial x^2}=k_3\frac{\displaystyle f_2'}{\displaystyle f_2^2}V^3\\
&\frac{\displaystyle \partial V^2}{\displaystyle \partial x^3}=-\frac{\displaystyle f_2'}{\displaystyle f_2}V^2-\frac{f_2}{k_3}(V^3)'
    \end{aligned}
  \right. \ .
\end{equation}
From the 2nd equation of \eqref{s4} we infer that
$$\frac{\displaystyle \partial }{\displaystyle \partial x^3}\left(f_2V^2(x^2,\cdot)\right)(x^3)=-\frac{\displaystyle f_2^2(x^3)}{\displaystyle k_3}(V^3)'(x^2)
$$
and we obtain
$$V^2(x^2,x^3)=-\frac{\displaystyle F_2(x^3)}{\displaystyle k_3f_2(x^3)}(V^3)'(x^2)+\frac{\displaystyle 1}{\displaystyle f_2(x^3)}M_2(x^2),$$
where $M_2=M_2(x^2)$. Now, using the 1st equation of \eqref{s4}, we find
$$M_2'(x^2)=k_3\left(\frac{\displaystyle f_2'}{\displaystyle f_2}-kF_2\right)(x^3)V^3(x^2),$$
and we deduce the following possible cases: (${{i_2}_b}_a$) ($V^3=0$) and (${{i_2}_b}_b$) ($\frac{\displaystyle f_2'}{\displaystyle f_2}-kF_2=k_0\in \mathbb R$).

In the 1st case, (${{i_2}_b}_a$), we get $M_2=c_2\in \mathbb R$, hence, $V^2(x^3)=\frac{\displaystyle c_2}{\displaystyle f_2(x^3)}$.

In the 2nd case, (${{i_2}_b}_b$), we have $kF_2=\frac{\displaystyle f_2'}{\displaystyle f_2}-k_0$, and
$M_2'(x^2)=k_3k_0V^3(x^2)$. We obtain, by integration, the expression of $M_2$, hence, of $V^2$, according as $k=0$, $k<0$, and $k>0$.

By a direct computation, we find that the converse implication also holds true, hence, the proof is complete.
\end{proof}

\begin{example}
The vector field $V=c\frac{\displaystyle \partial}{\displaystyle \partial x^2}$, $c\in\mathbb R\setminus\{0\}$, 
is a Killing vector field on the biwarped product manifold
$$\left(\mathbb R^3, \ {g}=\frac{1}{f_1^2}dx^1\otimes dx^1+\frac{1}{f_2^2}dx^2\otimes dx^2+dx^3\otimes dx^3\right),$$
where $$f_1(x^3)=e^{a_1x^3}, \ \ f_2(x^3)=e^{a_2x^3}, \ \ a_1,a_2\in \mathbb R\setminus\{0\}, a_1\neq a_2.$$
\end{example}

\begin{remark}
The condition $\frac{\displaystyle f''f-(f')^2}{\displaystyle f^4}=k\in \mathbb R\setminus\{0\}$ from Theorem \ref{tea} is satisfied if and only if the function $f$ is given by
$$f(t)=\tilde{c}e^{\bar{c}t}, \ \ \tilde{c}\in \mathbb R\setminus\{0\}, \bar{c}\in \mathbb R.$$
\end{remark}

\begin{corollary}\label{co2a}
If $f_1=f_2=:f(x^3)$, $f_3=k_3\in \mathbb R\setminus\{0\}$, then $V=\sum_{k=1}^3V^kE_k$ with $V^1=c_1$ is a Killing vector field on $(\mathbb R^3,g)$ if and only if one of the following assertions holds:

(A) \begin{equation*}
\left\{
    \begin{aligned}
&V^1=0\\
&V^2(x^3)=\frac{\displaystyle c}{\displaystyle f(x^3)}\\
&V^3=0
\end{aligned}
  \right. \ , \ c\in \mathbb R;
\end{equation*}

(B) \begin{equation*}
\left\{
    \begin{aligned}
&V^1=c_1\\
&V^2(x^3)=\frac{\displaystyle c_2}{\displaystyle f(x^3)}\\
&V^3=0
\end{aligned}
  \right. \ , \ c_1\in \mathbb R\setminus\{0\},c_2\in \mathbb R,
\end{equation*}
and $f=k_1\in \mathbb R\setminus\{0\}$;

(C) $f=k_1\in \mathbb R\setminus\{0\}$ and
\begin{equation*}
\left\{
    \begin{aligned}
&V^1=c_1\\
&V^2(x^3)=-\frac{\displaystyle k_1a_1}{\displaystyle k_3}x^3+a_2\\&V^3(x^2)=a_1x^2+a_3
\end{aligned}
  \right. \ , \ c_1,a_1,a_2,a_3\in \mathbb R.
\end{equation*}
\end{corollary}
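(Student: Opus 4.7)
The plan is to specialize Theorem \ref{tea} to the case $f_1 = f_2 =: f$ and inspect the three families of solutions produced there.

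Cases (A) and (B) transfer directly. Substituting $f_2 = f$ in (A) of the theorem reproduces (A) of the corollary without any restriction on $f$. Case (B) of the theorem imposes $f_1 = k_1 \in \mathbb{R}\setminus\{0\}$, so under the identification $f_1 = f_2 = f$ one must have $f = k_1$, recovering (B) of the corollary.

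Case (C) is the interesting one. The theorem requires $f_1 = k_1$ constant and $\frac{f_2'' f_2 - (f_2')^2}{f_2^4} = k \in \mathbb{R}$; with $f_1 = f_2 = f$ this compels $f = k_1$ constant, whence $f' = f'' = 0$ and so automatically $k = 0$. Rather than push the formulas of Theorem \ref{tea}(C)(a) through the degenerate constant-$f$ limit, I plan to integrate the system \eqref{sms} directly under the assumptions $f = k_1$ and $V^1 = c_1$. Equations~2--4 and~6 of \eqref{sms} then collapse to $V^2 = V^2(x^3)$ and $V^3 = V^3(x^2)$, while equation~5 reduces to the single coupling
\[
k_1 (V^3)'(x^2) + k_3 (V^2)'(x^3) = 0.
\]
Since the two summands depend on independent variables, each must equal a common constant; setting $(V^3)'(x^2) = a_1$ then forces $V^3(x^2) = a_1 x^2 + a_3$ and $V^2(x^3) = -\frac{k_1 a_1}{k_3}\, x^3 + a_2$, which is exactly the content of (C).

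The only subtlety I foresee is this: subcase (a) of Theorem \ref{tea}(C), as written, is tailored to $f_2(x^3) = \tilde c e^{\bar c x^3}$ with $\bar c$ possibly nonzero, and therefore does not literally display the solution in the fully constant sub-regime $f_2 = k_1$ forced here; the short direct integration above closes that gap. The converse implication (that each of the three listed triples indeed satisfies \eqref{sms}) is verified by routine substitution, exactly as in the closing line of the proof of Theorem \ref{tea}.
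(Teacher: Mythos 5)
Your proposal is correct and follows essentially the route the paper intends: the corollary is obtained by specializing Theorem \ref{tea} to $f_1=f_2=f$, with (A) and (B) transferring verbatim and (C) forcing $f$ constant. Your short direct integration of \eqref{sms} in the constant-$f$ regime (separating variables in $k_1(V^3)'(x^2)+k_3(V^2)'(x^3)=0$) is a sensible and correct way to handle the degenerate subcase where the exponential formulas of Theorem \ref{tea}(C)(a) do not literally apply, and it yields exactly the family in (C).
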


From the symmetry in $V^1$ and $V^2$ of the system \eqref{sms}, we can further conclude.
\begin{theorem}\label{teb}
If $f_1=f_1(x^3)$, $f_2=f_2(x^3)$, $f_3=k_3\in \mathbb R\setminus\{0\}$, then, a vector field $V=\sum_{k=1}^3V^kE_k$ with $V^2=c_2\in \mathbb R$ is a Killing vector field on $(\mathbb R^3,g)$ if and only if one of the following assertions holds:

(A) \begin{equation*}
\left\{
    \begin{aligned}
&V^1(x^3)=\frac{\displaystyle c}{\displaystyle f_1(x^3)}\\
&V^2=0\\
&V^3=0
\end{aligned}
  \right. \ , \ c\in \mathbb R;
\end{equation*}

(B) \begin{equation*}
\left\{
    \begin{aligned}
&V^1(x^3)=\frac{\displaystyle c_1}{\displaystyle f_1(x^3)}\\
&V^2=c_2\\
&V^3=0
\end{aligned}
  \right. \ , \ c_1\in \mathbb R,c_2\in \mathbb R\setminus\{0\},
\end{equation*}
and $f_2=k_2\in \mathbb R\setminus\{0\}$;

(C) $\frac{\displaystyle f_1''f_1-(f_1')^2}{\displaystyle f_1^4}=k\in \mathbb R$, $f_2=k_2\in \mathbb R\setminus\{0\}$, and,
according to the sign of $k$, we consequently have:

\hspace{0.5cm} (a) $k=0$ and 
\begin{equation*}
\left\{
    \begin{aligned}
&V^1(x^1,x^3)=\frac{\displaystyle k_3\bar{c}}{\displaystyle \tilde{c}}(ax^1+b)e^{-\bar{c}x^3}\\
&V^2=c_2\\
&V^3=a
\end{aligned}
  \right. \ , \ a,b,c_2,\bar{c}\in \mathbb R, \tilde{c}\in \mathbb R\setminus\{0\},
\end{equation*}

\hspace{0.5cm} (b) $k<0$ and 
\begin{equation*}
\left\{
    \begin{aligned}
&V^1(x^1,x^3)=-\frac{\displaystyle \sqrt{-k}}{\displaystyle k}\left(\frac{\displaystyle f_1'}{\displaystyle f_1^2}\right)(x^3)\left(a_1e^{k_3\sqrt{-k}x^1}-a_2e^{-k_3\sqrt{-k}x^1}\right)\\
&V^2=c_2\\
&V^3(x^1)=a_1e^{k_3\sqrt{-k}x^1}+a_2e^{-k_3\sqrt{-k}x^1}
\end{aligned}
  \right. \ , \ a_1,a_2,c_2\in \mathbb R,
\end{equation*}

\hspace{0.5cm} (c) $k>0$ and 
\begin{equation*}
\left\{
    \begin{aligned}
&V^1(x^1,x^3)=\frac{\displaystyle \sqrt{k}}{\displaystyle k}\left(\frac{\displaystyle f_1'}{\displaystyle f_1^2}\right)(x^3)\left(a_1\sin({k_3\sqrt{k}x^1})-a_2\cos({k_3\sqrt{k}x^1})\right)\\
&V^2=c_2\\
&V^3(x^1)=a_1\cos({k_3\sqrt{k}x^1})+a_2\sin({k_3\sqrt{k}x^1})
\end{aligned}
  \right. \ , \ a_1,a_2,c_2\in \mathbb R.
\end{equation*}
\end{theorem}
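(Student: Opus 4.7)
The plan is to exploit the permutation symmetry of the Killing system \eqref{sms} under the involution $(x^1, V^1, f_1) \leftrightarrow (x^2, V^2, f_2)$. First, I would verify that this swap leaves \eqref{sms} invariant as a whole: equations 1 and 2 interchange, equations 5 and 6 interchange, equation 3 is unaffected, and equation 4 (which reads $f_1 \partial V^2/\partial x^1 = -f_2 \partial V^1/\partial x^2$) is sent to its own rearrangement under the swap. Consequently, the hypothesis $V^2 = c_2 \in \mathbb{R}$ of Theorem \ref{teb} is mapped to the hypothesis $V^1 = c_1 \in \mathbb{R}$ of Theorem \ref{tea}, so each Killing vector field produced by Theorem \ref{tea} yields, via this involution, a Killing vector field satisfying the setup of Theorem \ref{teb}, and vice versa.

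Next, I would translate each case of Theorem \ref{tea} into the corresponding case of Theorem \ref{teb} by the renaming prescribed above. Case (A) of Theorem \ref{tea} (with $V^1 = 0$, $V^3 = 0$, $V^2 = c/f_2$) mirrors into case (A) of Theorem \ref{teb} (with $V^2 = 0$, $V^3 = 0$, $V^1 = c/f_1$). Case (B), which forces $f_1$ to be a nonzero constant, mirrors into case (B) where $f_2 = k_2$. Case (C), governed by the condition $(f_2'' f_2 - (f_2')^2)/f_2^4 = k$ together with $f_1 = k_1$, mirrors into $(f_1'' f_1 - (f_1')^2)/f_1^4 = k$ together with $f_2 = k_2$, and the three subcases (a), (b), (c) according to the sign of $k$ are obtained by swapping $x^2 \leftrightarrow x^1$ and $f_2 \leftrightarrow f_1$ inside the closed-form expressions for $V^2$ and $V^3$ given in Theorem \ref{tea}.

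The main obstacle is the careful sign check required in verifying that the involution is an exact symmetry of \eqref{sms}: the minus sign in equation 4 and the signs in equations 5 and 6 need to be tracked so that one really does land on the same system after swapping, rather than on a system with discrepant signs. Once this is confirmed, the translation of each subcase is a mechanical renaming, and the converse implication (that the listed vector fields indeed satisfy \eqref{sms}) reduces to the analogous direct substitution already performed in the proof of Theorem \ref{tea}. As a safeguard, one could alternatively reproduce the derivation of Theorem \ref{tea} step by step with the roles of indices $1$ and $2$ interchanged throughout, arriving at the same list without invoking the symmetry explicitly.
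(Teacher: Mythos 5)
Your proposal is correct and is essentially the paper's own argument: the paper derives Theorem \ref{teb} from Theorem \ref{tea} precisely by invoking the symmetry of the system \eqref{sms} under interchanging the roles of $(x^1,V^1,f_1)$ and $(x^2,V^2,f_2)$ (equations 1--2 and 5--6 swap, equation 3 is fixed, and equation 4 maps to an equivalent rearrangement of itself). Your explicit sign check of this involution is exactly the verification the paper leaves implicit, so nothing further is needed.
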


\begin{example}
The vector field $V=c\frac{\displaystyle \partial}{\displaystyle \partial x^1}$, $c\in\mathbb R\setminus\{0\}$, 
is a Killing vector field on the biwarped product manifold
$$\left(\mathbb R^3, \ {g}=\frac{1}{f_1^2}dx^1\otimes dx^1+\frac{1}{f_2^2}dx^2\otimes dx^2+dx^3\otimes dx^3\right),$$
where $$f_1(x^3)=e^{a_1x^3}, \ \ f_2(x^3)=e^{a_2x^3}, \ \ a_1,a_2\in \mathbb R\setminus\{0\}, a_1\neq a_2.$$
\end{example}

\begin{corollary}\label{co2b}
If $f_1=f_2=:f(x^3)$, $f_3=k_3\in \mathbb R\setminus\{0\}$, then $V=\sum_{k=1}^3V^kE_k$ with $V^2=c_2$ is a Killing vector field on $(\mathbb R^3,g)$ if and only if one of the following assertions holds:

(A) \begin{equation*}
\left\{
    \begin{aligned}
&V^1(x^3)=\frac{\displaystyle c}{\displaystyle f(x^3)}\\
&V^2=0\\
&V^3=0
\end{aligned}
  \right. \ , \ c\in \mathbb R;
\end{equation*}

(B) \begin{equation*}
\left\{
    \begin{aligned}
&V^1(x^3)=\frac{\displaystyle c_1}{\displaystyle f(x^3)}\\
&V^2=c_2\\
&V^3=0
\end{aligned}
  \right. \ , \ c_1\in \mathbb R,c_2\in \mathbb R\setminus\{0\},
\end{equation*}
and $f=k_1\in \mathbb R\setminus\{0\}$;

(C) $f=k_1\in \mathbb R\setminus\{0\}$ and
\begin{equation*}
\left\{
    \begin{aligned}
&V^1(x^3)=-\frac{\displaystyle k_1a_1}{\displaystyle k_3}x^3+a_2\\
&V^2=c_2\\
&V^3(x^1)=a_1x^1+a_3
\end{aligned}
  \right. \ , \ c_2,a_1,a_2,a_3\in \mathbb R.
\end{equation*}
\end{corollary}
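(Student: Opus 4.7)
The plan is to obtain Corollary \ref{co2b} from Corollary \ref{co2a} by exploiting the index symmetry of the Killing system \eqref{sms}. When $f_1=f_2=f$, the six equations of \eqref{sms} are globally invariant under the simultaneous relabeling $V^1\leftrightarrow V^2$, $x^1\leftrightarrow x^2$: equations $3$ and $4$ are self-symmetric, while equations $1$--$2$ and equations $5$--$6$ form two pairs that get interchanged. Consequently, the system admits an involution that turns every Killing field $V$ with $V^1=c_1$ into a Killing field with $V^2=c_1$ (and vice versa), with the remaining components correspondingly relabeled.

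Applying this involution to the three cases (A), (B), (C) of Corollary \ref{co2a} produces, by pure bookkeeping, the three cases (A), (B), (C) stated here. For instance, in case (C) the triple $(V^1,V^2,V^3)=(c_1,\ -\frac{k_1a_1}{k_3}x^3+a_2,\ a_1x^2+a_3)$ is mapped to $(-\frac{k_1a_1}{k_3}x^3+a_2,\ c_2,\ a_1x^1+a_3)$, which is precisely the claimed expression. As in the preceding corollary and theorems, the converse implication is settled by substituting each formula back into \eqref{sms} and verifying the six equations one by one.

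I do not expect any substantial obstacle; the only point deserving care is checking the compatibility of the swap with all six equations of \eqref{sms} under the hypothesis $f_1=f_2$. If one prefers instead a direct specialization of Theorem \ref{teb}, a small subtlety appears: its sub-case (C)(a) expresses $V^1$ in the form $\frac{k_3\bar{c}}{\tilde{c}}(ax^1+b)e^{-\bar{c}x^3}$, which collapses to zero at the value $\bar{c}=0$ forced by $f_1=f_2\equiv k_1$ constant. Integrating the corresponding Killing equations from scratch in that boundary regime (equation $6$ of \eqref{sms} reduces to $k_3(V^1)'(x^3)+k_1(V^3)'(x^1)=0$, forcing each side to be constant) restores the missing linear term $-\frac{k_1a_1}{k_3}x^3+a_2$ and recovers the same formulas.
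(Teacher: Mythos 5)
Your proposal is correct and follows essentially the paper's own route: the paper itself passes from the $V^1$-constant results to the $V^2$-constant ones by invoking the symmetry of \eqref{sms} under $V^1\leftrightarrow V^2$, $x^1\leftrightarrow x^2$ (together with $f_1\leftrightarrow f_2$, which is trivial here since $f_1=f_2=f$), so applying that involution directly to the three cases of Corollary \ref{co2a} is the intended argument, and your verification that equations 1--2 and 5--6 swap while 3 and 4 are invariant is accurate. Your side remark is also well taken: a literal specialization of Theorem \ref{teb}(C)(a) degenerates at $\bar{c}=0$ (the stated formula for $V^1$ collapses to zero), and the linear term $-\frac{k_1a_1}{k_3}x^3+a_2$ must be recovered by integrating the sixth equation of \eqref{sms} directly, exactly as you indicate.
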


For the last case, we prove the following result.
\begin{theorem}\label{te2}
Let $f_1=f_1(x^3)$, $f_2=f_2(x^3)$, $f_3=k_3\in \mathbb R\setminus\{0\}$. Then, a vector field $V=\sum_{k=1}^3V^kE_k$ with $V^3=c\in \mathbb R$ is a Killing vector field on $(\mathbb R^3,g)$ if and only if one of the following four assertions holds:

(A) \begin{equation*}
\left\{
    \begin{aligned}
&V^1(x^3)=\frac{\displaystyle c_1}{\displaystyle f_1(x^3)}\\
&V^2(x^3)=\frac{\displaystyle c_2}{\displaystyle f_2(x^3)}\\
&V^3=0
\end{aligned}
  \right. \ , \ c_1,c_2\in \mathbb R;
\end{equation*}

(B) \begin{equation*}
\left\{
    \begin{aligned}
&V^1(x^1,x^3)=\frac{\displaystyle k_3c\bar{c_1}x^1+\hat{c_1}}{\displaystyle {c_1}e^{\bar{c_1}x^3}}\\
&V^2(x^2,x^3)=\frac{\displaystyle k_3c\bar{c_2}x^2+\hat{c_2}}{\displaystyle {c_2}e^{\bar{c_2}x^3}}\\
&V^3=c
\end{aligned}
  \right. \ , \ \bar{c_1}, \bar{c_2}, \hat{c_1}, \hat{c_2}\in \mathbb R, c, {c_1}, {c_2}\in \mathbb R\setminus \{0\},
\end{equation*}
and $f_i(x^3)=c_ie^{\bar{c_i}x^3}$, $i\in \{1,2\}$;

(C) \begin{equation*}
\left\{
    \begin{aligned}
&V^1(x^2,x^3)=\frac{\displaystyle k_0}{\displaystyle f_2(x^3)}x^2+\frac{\displaystyle c_1}{\displaystyle f_1(x^3)}\\
&V^2(x^1,x^3)=-\frac{\displaystyle k_0}{\displaystyle f_1(x^3)}x^1+\frac{\displaystyle c_2}{\displaystyle f_2(x^3)}\\
&V^3=0
\end{aligned}
  \right. \ , \ c_1,c_2\in \mathbb R, k_0\in \mathbb R\setminus \{0\},
\end{equation*}
and $\frac{\displaystyle f_1}{\displaystyle f_2}$ is constant;

(D) \begin{equation*}
\left\{
    \begin{aligned}
&V^1(x^1,x^2,x^3)=\frac{\displaystyle k_0}{\displaystyle c_2e^{\bar{c}x^3}}x^2+\frac{\displaystyle k_3c\bar{c}x^1+\hat{c_1}}{\displaystyle c_1e^{\bar{c}x^3}}+\tilde{c_1}\\
&V^2(x^1,x^2,x^3)=-\frac{\displaystyle k_0}{\displaystyle c_1e^{\bar{c}x^3}}x^1+\frac{\displaystyle k_3c\bar{c}x^2+\hat{c_2}}{\displaystyle c_2e^{\bar{c}x^3}}+\tilde{c_2}\\
&V^3=c
\end{aligned}
  \right.  \ , \ \bar{c},\hat{c_1},\hat{c_2},\tilde{c_1},\tilde{c_2}\in \mathbb R, c, c_1, c_2, k_0\in \mathbb R\setminus \{0\},
\end{equation*}
and $f_i(x^3)=c_ie^{\bar{c}x^3}$, $i\in \{1,2\}$, such that
$\bar{c}(\tilde{c_1}^2+\tilde{c_2}^2)=0$.
\end{theorem}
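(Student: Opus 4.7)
Specializing the Killing system \eqref{sms} to $V^3=c\in\mathbb R$ renders the third equation automatic and turns the fifth and sixth into $\partial_{x^3}(f_2V^2)=0$ and $\partial_{x^3}(f_1V^1)=0$. Hence there exist smooth functions $A, B$ of $(x^1,x^2)$ with
$$V^1(x^1,x^2,x^3)=\frac{A(x^1,x^2)}{f_1(x^3)}, \qquad V^2(x^1,x^2,x^3)=\frac{B(x^1,x^2)}{f_2(x^3)}.$$
Substituting these into the first, second, and fourth equations of \eqref{sms} produces the reduced system
$$\frac{\partial A}{\partial x^1}=k_3c\,\frac{f_1'(x^3)}{f_1(x^3)}, \qquad \frac{\partial B}{\partial x^2}=k_3c\,\frac{f_2'(x^3)}{f_2(x^3)}, \qquad f_1^2(x^3)\,\frac{\partial B}{\partial x^1}+f_2^2(x^3)\,\frac{\partial A}{\partial x^2}=0.$$
In the first two relations the left-hand sides depend only on $(x^1,x^2)$ while the right-hand sides depend only on $x^3$, so each side must be a real constant; this separation of variables drives the entire argument.

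The analysis then splits according to whether $c=0$ or $c\neq 0$. If $c=0$, we get $A=A(x^2)$ and $B=B(x^1)$, and the compatibility equation becomes $f_1^2(x^3)B'(x^1)+f_2^2(x^3)A'(x^2)=0$. Either both $A'$ and $B'$ vanish, yielding case (A) with $V^1=c_1/f_1$ and $V^2=c_2/f_2$, or $(f_1/f_2)^2$ must be a positive constant, forcing $f_1/f_2$ constant; then $A$ and $B$ are affine with coupled slopes, and matching the coefficients produces case (C), parametrized by a single nonzero scalar $k_0$.

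If $c\neq 0$, the separation forces $f_i'/f_i=\bar{c_i}$ constant, hence $f_i(x^3)=c_ie^{\bar{c_i}x^3}$, and integration yields $A(x^1,x^2)=k_3c\bar{c_1}x^1+\tilde A(x^2)$, $B(x^1,x^2)=k_3c\bar{c_2}x^2+\tilde B(x^1)$. The compatibility relation then reads
$$c_1^2\,e^{2\bar{c_1}x^3}\,\tilde B'(x^1)+c_2^2\,e^{2\bar{c_2}x^3}\,\tilde A'(x^2)=0.$$
When $\bar{c_1}\neq \bar{c_2}$, linear independence of the two exponentials in $x^3$ forces $\tilde A'\equiv\tilde B'\equiv 0$, producing case (B). When $\bar{c_1}=\bar{c_2}=:\bar c$, the relation reduces to $c_1^2\tilde B'(x^1)+c_2^2\tilde A'(x^2)=0$, so both derivatives are constant and $\tilde A, \tilde B$ are affine with coupled slopes, which after renaming of constants gives the linear-in-$x^1, x^2$ terms of case (D).

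The most delicate point is the constraint $\bar c(\tilde{c_1}^2+\tilde{c_2}^2)=0$ in case (D): an additive constant $\tilde{c_j}$ in the formula for $V^j$ is compatible with $\partial_{x^3}(f_jV^j)=0$ only if $\tilde{c_j}\,f_j(x^3)$ is independent of $x^3$, i.e., only if $\tilde{c_j}=0$ or $\bar c=0$. I expect this bookkeeping in case (D) to be the main obstacle, since one has to track simultaneously the matching-exponent condition $\bar{c_1}=\bar{c_2}$, the slope coupling $c_1^2\tilde B'+c_2^2\tilde A'=0$, and the additive-constant constraint above. Finally, the converse implication is verified by direct substitution of each family (A)--(D) into \eqref{sms}, which is routine.
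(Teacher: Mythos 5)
Your plan is correct, and it reaches exactly the four families (A)--(D), but it organizes the integration differently from the paper. The paper's proof recycles the auxiliary quantities from the proof of Proposition \ref{pj1}: with $V^3=c$ it first deduces that the function $K$ in \eqref{s2}--\eqref{s3} is a constant $k_0$, integrates in $x^1,x^2$ to write $V^1=\frac{k_0}{f_2}x^2+H_1(x^1,x^3)$, $V^2=-\frac{k_0}{f_1}x^1+H_2(x^2,x^3)$, splits first on $k_0=0$ versus $k_0\neq0$ (the latter forcing $f_1'/f_1=f_2'/f_2$), resolves the $x^3$-dependence of $H_1,H_2$ afterwards, and only at the very end obtains the constraint $\bar c(\tilde c_1^2+\tilde c_2^2)=0$ as the leftover conditions $k c_7=k c_8=0$ from the fifth and sixth equations. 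You instead exploit those fifth and sixth equations first: with $V^3$ constant they integrate immediately to $V^1=A(x^1,x^2)/f_1$, $V^2=B(x^1,x^2)/f_2$, so the whole problem collapses to the planar system $\partial_{x^1}A=k_3c\,f_1'/f_1$, $\partial_{x^2}B=k_3c\,f_2'/f_2$, $f_1^2\partial_{x^1}B+f_2^2\partial_{x^2}A=0$, and your primary split is on $c=0$ versus $c\neq0$, with the secondary split on $\bar c_1\neq\bar c_2$ versus $\bar c_1=\bar c_2$ replacing the paper's $k_0$-dichotomy. Your route is arguably cleaner: the condition $f_1/f_2$ constant in (C) and the exponential form of $f_1,f_2$ in (B), (D) drop out of the separation-of-variables and linear-independence arguments rather than being imposed, and the awkward additive constants $\tilde c_1,\tilde c_2$ never arise as independent parameters --- you correctly observe that they are compatible with $\partial_{x^3}(f_jV^j)=0$ only when $\bar c=0$ (in which case they are absorbed into $\hat c_1,\hat c_2$) or when they vanish, which is precisely the content of the constraint $\bar c(\tilde c_1^2+\tilde c_2^2)=0$; the paper's formulation buys a statement whose parameters line up verbatim with its integration constants. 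The remaining items you defer (the affine solutions with coupled slopes matching (C) and (D) after renaming, and the converse by direct substitution) are indeed routine and match what the paper does.
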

\begin{proof}
Since $V^3=c\in \mathbb R$, \eqref{s4} becomes
\begin{equation*}
\left\{
    \begin{aligned}
&\frac{\displaystyle \partial K}{\displaystyle \partial x^1}(x^1,x^2)=0\\
&\frac{\displaystyle \partial K}{\displaystyle \partial x^2}(x^1,x^2)=0
    \end{aligned}
  \right. \ ;
\end{equation*}
hence, $K=k_0\in \mathbb R$, and \eqref{s2} and \eqref{s3} imply
\begin{equation*}
\left\{
    \begin{aligned}
&\frac{\displaystyle \partial V^1}{\displaystyle \partial x^2}(x^1,x^2,x^3)=\frac{\displaystyle k_0}{\displaystyle f_2(x^3)}\\
&\frac{\displaystyle \partial V^2}{\displaystyle \partial x^1}(x^1,x^2,x^3)=-\frac{\displaystyle k_0}{\displaystyle f_1(x^3)}
\end{aligned}
  \right. \ ,
\end{equation*}
which, by integration, give
\begin{equation}\label{s9}
\left\{
    \begin{aligned}
&V^1(x^1,x^2,x^3)=\frac{\displaystyle k_0}{\displaystyle f_2(x^3)}x^2+H_1(x^1,x^3)\\
&V^2(x^1,x^2,x^3)=-\frac{\displaystyle k_0}{\displaystyle f_1(x^3)}x^1+H_2(x^2,x^3)
\end{aligned}
  \right. \ ,
\end{equation}
where $H_1=H_1(x^1,x^3)$ and $H_2=H_2(x^2,x^3)$. Now, differentiating the equations of \eqref{s9} and using the 1st, the 6th, the 2nd, and the 5th equations of \eqref{sms}, we find
\begin{equation*}
\left\{
    \begin{aligned}
&\frac{\displaystyle \partial H_1}{\displaystyle \partial x^1}(x^1,x^3)=k_3\left(\frac{\displaystyle f_1'}{\displaystyle f_1^2}\right)(x^3)c\\
&\frac{\displaystyle \partial H_1}{\displaystyle \partial x^3}(x^1,x^3)=k_0\left[\left(\frac{\displaystyle f_2'}{\displaystyle f_2^2}-\frac{\displaystyle f_1'}{\displaystyle f_1f_2}\right)(x^3)\right]x^2-\left(\frac{\displaystyle f_1'}{\displaystyle f_1}\right)(x^3)H_1(x^1,x^3)
\end{aligned}
  \right. \ ,
\end{equation*}
and
\begin{equation*}
\left\{
    \begin{aligned}
&\frac{\displaystyle \partial H_2}{\displaystyle \partial x^2}(x^2,x^3)=k_3\left(\frac{\displaystyle f_2'}{\displaystyle f_2^2}\right)(x^3)c\\
&\frac{\displaystyle \partial H_2}{\displaystyle \partial x^3}(x^2,x^3)=k_0\left[\left(\frac{\displaystyle f_2'}{\displaystyle f_1f_2}-\frac{\displaystyle f_1'}{\displaystyle f_1^2}\right)(x^3)\right]x^1-\left(\frac{\displaystyle f_2'}{\displaystyle f_2}\right)(x^3)H_2(x^2,x^3)
\end{aligned}
  \right. \ ,
\end{equation*}
which imply that either ($i_1$) ($k_0=0$) or ($i_2$) ($k_0\neq 0$ and $\frac{\displaystyle f_1'}{\displaystyle f_1}=\frac{\displaystyle f_2'}{\displaystyle f_2}$).

($i_1$) If $k_0=0$, then $K=0$, and \eqref{s2}, \eqref{s3}, and \eqref{s9}, will consequently imply
\begin{equation*}
\left\{
    \begin{aligned}
&\frac{\displaystyle \partial V^1}{\displaystyle \partial x^2}(x^1,x^2,x^3)=0\\
&\frac{\displaystyle \partial V^2}{\displaystyle \partial x^1}(x^1,x^2,x^3)=0
\end{aligned}
  \right. \ ,
\end{equation*}
which imply that $V^1=V^1(x^1,x^3)$ and $V^2=V^2(x^2,x^3)$;
\begin{equation*}
\left\{
    \begin{aligned}
&\frac{\displaystyle \partial V^1}{\displaystyle \partial x^1}(x^1,x^3)=k_3\left(\frac{\displaystyle f_1'}{\displaystyle f_1^2}\right)(x^3)c\\
&\frac{\displaystyle \partial V^1}{\displaystyle \partial x^3}(x^1,x^3)=-\left(\frac{\displaystyle f_1'}{\displaystyle f_1}\right)(x^3)V^1(x^1,x^3)
\end{aligned}
  \right. \ ,
\end{equation*}
\begin{equation*}
\left\{
    \begin{aligned}
&\frac{\displaystyle \partial V^2}{\displaystyle \partial x^2}(x^2,x^3)=k_3\left(\frac{\displaystyle f_2'}{\displaystyle f_2^2}\right)(x^3)c\\
&\frac{\displaystyle \partial V^2}{\displaystyle \partial x^3}(x^2,x^3)=-\left(\frac{\displaystyle f_2'}{\displaystyle f_2}\right)(x^3)V^2(x^2,x^3)
\end{aligned}
  \right. \ .
\end{equation*}
From the 2nd equations of the last two systems, we infer that
$$V^1(x^1,x^3)=\frac{M_1(x^1)}{f_1(x^3)}, \ \ V^2(x^2,x^3)=\frac{M_2(x^2)}{f_2(x^3)},$$
where $M_1=M_1(x^1)$ and $M_2=M_2(x^2)$, which, by differentiation, give
\begin{equation*}
\left\{
    \begin{aligned}
&M_1'(x^1)=k_3c\left(\frac{\displaystyle f_1'}{\displaystyle f_1}\right)(x^3)\\
&M_2'(x^2)=k_3c\left(\frac{\displaystyle f_2'}{\displaystyle f_2}\right)(x^3)
\end{aligned}
  \right . \ .
\end{equation*}
We deduce the following possible cases: (${i_1}_a$) ($c=0$) and (${i_1}_b$) ($c\neq 0$, $\displaystyle\frac{f_1'}{f_1}=\bar{c}_1\in \mathbb R$, $\displaystyle\frac{f_2'}{f_2}=\bar{c}_2\in \mathbb R$).

In the first case, (${i_1}_a$), we get $M_1=c_3\in \mathbb R$ and $M_2=c_4\in \mathbb R$, therefore,
\begin{equation*}
\left\{
    \begin{aligned}
&V^1(x^3)=\frac{\displaystyle c_3}{\displaystyle f_1(x^3)}\\
&V^2(x^3)=\frac{\displaystyle c_4}{\displaystyle f_2(x^3)}\\
&V^3=0
\end{aligned}
  \right.  \ .
\end{equation*}

In the second case, (${i_1}_b$), we get $f_i(x^3)=c_ie^{\bar{c_i}x^3}$, ${c_i}\in \mathbb R\setminus\{0\}$, $\bar{c_i}\in \mathbb R$, $i\in \{1,2\}$; therefore,
\begin{equation*}
\left\{
    \begin{aligned}
&M_1(x^1)=k_3c\bar{c_1}x^1+\hat{c_1}\\
&M_2(x^2)=k_3c\bar{c_2}x^2+\hat{c_2}
\end{aligned}
  \right. \ , \hat{c_1}, \hat{c_2}\in \mathbb R \ ,
\end{equation*}
\begin{equation*}
\left\{
    \begin{aligned}
&V^1(x^1,x^3)=\frac{\displaystyle k_3c\bar{c_1}x^1+\hat{c_1}}{\displaystyle {c_1}}e^{-\bar{c_1}x^3}\\
&V^2(x^2,x^3)=\frac{\displaystyle k_3c\bar{c_2}x^2+\hat{c_2}}{\displaystyle {c_2}}e^{-\bar{c_2}x^3}\\
&V^3=c
\end{aligned}
  \right. \ .
\end{equation*}

(${i_2}$) If $k_0\neq 0$ and $\frac{\displaystyle f_1'}{\displaystyle f_1}=\frac{\displaystyle f_2'}{\displaystyle f_2}$, then $f_2(x^3)=c_0f_1(x^3)$, $c_0\in \mathbb R\setminus \{0\}$, and
\begin{equation}\label{s10}
\left\{
    \begin{aligned}
&\frac{\displaystyle \partial H_1}{\displaystyle \partial x^1}(x^1,x^3)=k_3c\left(\frac{\displaystyle f_1'}{\displaystyle f_1^2}\right)(x^3)\\
&\frac{\displaystyle \partial H_1}{\displaystyle \partial x^3}(x^1,x^3)=-\left(\frac{\displaystyle f_1'}{\displaystyle f_1}\right)(x^3)H_1(x^1,x^3)
\end{aligned}
  \right. \ ,
\end{equation}
and
\begin{equation}\label{s11}
\left\{
    \begin{aligned}
&\frac{\displaystyle \partial H_2}{\displaystyle \partial x^2}(x^2,x^3)=k_3c\left(\frac{\displaystyle f_2'}{\displaystyle f_2^2}\right)(x^3)\\
&\frac{\displaystyle \partial H_2}{\displaystyle \partial x^3}(x^2,x^3)=-\left(\frac{\displaystyle f_2'}{\displaystyle f_2}\right)(x^3)H_2(x^2,x^3)
\end{aligned}
  \right. \ .
\end{equation}
From the 2nd equations of the last two systems, we infer that
$$H_1(x^1,x^3)=\frac{M_1(x^1)}{f_1(x^3)}, \ \ H_2(x^2,x^3)=\frac{M_2(x^2)}{f_2(x^3)},$$
where $M_1=M_1(x^1)$ and $M_2=M_2(x^2)$, which, by differentiation, give
\begin{equation*}
M_1'(x^1)=k_3c\left(\frac{\displaystyle f_1'}{\displaystyle f_1}\right)(x^3)=k_3c\left(\frac{\displaystyle f_2'}{\displaystyle f_2}\right)(x^3)=M_2'(x^2).
\end{equation*}
We deduce the following possible cases: (${i_2}_a$) ($c=0$) and (${i_2}_b$) ($c\neq 0$, $\displaystyle\frac{f_1'}{f_1}=\displaystyle\frac{f_2'}{f_2}=k\in \mathbb R$).

In the first case, (${i_2}_a$), we get $M_1=c_3\in \mathbb R$ and $M_2=c_4\in \mathbb R$; therefore,
\begin{equation*}
\left\{
    \begin{aligned}
&H_1(x^3)=\frac{\displaystyle c_3}{\displaystyle f_1(x^3)}\\
&H_2(x^3)=\frac{\displaystyle c_4}{\displaystyle f_2(x^3)}
\end{aligned}
  \right. \ ,
\end{equation*}
and, finally,
\begin{equation*}
\left\{
    \begin{aligned}
&V^1(x^2,x^3)=\frac{\displaystyle k_0}{\displaystyle f_2(x^3)}x^2+\frac{\displaystyle c_3}{\displaystyle f_1(x^3)}\\
&V^2(x^1,x^3)=-\frac{\displaystyle k_0}{\displaystyle f_1(x^3)}x^1+\frac{\displaystyle c_4}{\displaystyle f_2(x^3)}\\
&V^3=0
\end{aligned}
  \right. \ .
\end{equation*}

In the second case, (${i_2}_b$), we get $f_1(x^3)=c_1e^{kx^3}$, $c_1\in \mathbb R\setminus\{0\}$, $f_2(x^3)=c_2e^{kx^3}$, $c_2\in \mathbb R\setminus\{0\}$, and
\begin{equation*}
\left\{
    \begin{aligned}
&\frac{\displaystyle \partial H_1}{\displaystyle \partial x^1}(x^1,x^3)=\frac{\displaystyle kk_3c}{\displaystyle c_1}e^{-kx^3}\\
&\frac{\displaystyle \partial H_1}{\displaystyle \partial x^3}(x^1,x^3)=-k\frac{\displaystyle M_1(x^1)}{\displaystyle c_1}e^{-kx^3}
\end{aligned}
  \right. \ ,
\end{equation*}
and
\begin{equation*}
\left\{
    \begin{aligned}
&\frac{\displaystyle \partial H_2}{\displaystyle \partial x^2}(x^2,x^3)=\frac{\displaystyle kk_3c}{\displaystyle c_2}e^{-kx^3}\\
&\frac{\displaystyle \partial H_2}{\displaystyle \partial x^3}(x^2,x^3)=-k\frac{\displaystyle M_2(x^2)}{\displaystyle c_2}e^{-kx^3}
\end{aligned}
  \right. \ .
\end{equation*}
Now, integrating the 1st equations of the previous systems, we find
\begin{equation*}
\left\{
    \begin{aligned}
&H_1(x^1,x^3)=\frac{\displaystyle kk_3c}{\displaystyle c_1}e^{-kx^3}x^1+N_1(x^3)\\
&H_2(x^2,x^3)=\frac{\displaystyle kk_3c}{\displaystyle c_2}e^{-kx^3}x^2+N_2(x^3)
\end{aligned}
  \right. \ ,
\end{equation*}
where $N_1=N_1(x^3)$ and $N_2=N_2(x^3)$. By differentiating them with respect to $x^3$ and using the 2nd equations of the same systems, we get
\begin{equation*}
\left\{
    \begin{aligned}
&N_1'(x^3)=\frac{\displaystyle k^2k_3c}{\displaystyle c_1}e^{-kx^3}x^1-k\frac{\displaystyle M_1(x^1)}{\displaystyle c_1}e^{-kx^3}\\
&N_2'(x^3)=\frac{\displaystyle k^2k_3c}{\displaystyle c_2}e^{-kx^3}x^2-k\frac{\displaystyle M_2(x^2)}{\displaystyle c_2}e^{-kx^3}
\end{aligned}
  \right. \ ,
\end{equation*}
and, by differentiating them with respect to $x^1$ and $x^2$, respectively, we infer that
$$M_1'(x^1)=kk_3c=M_2'(x^2);$$
therefore,
\begin{equation*}
\left\{
    \begin{aligned}
&M_1(x^1)=kk_3cx^1+c_5\\
&M_2(x^2)=kk_3cx^2+c_6
\end{aligned}
  \right. \ , \  c_5, c_6\in \mathbb R,
\end{equation*}
\begin{equation*}
\left\{
    \begin{aligned}
&N_1(x^3)=\frac{\displaystyle c_5}{\displaystyle c_1}e^{-kx^3}+c_7\\
&N_2(x^3)=\frac{\displaystyle c_6}{\displaystyle c_2}e^{-kx^3}+c_8
\end{aligned}
  \right. \ , \ c_7,c_8\in \mathbb R,
\end{equation*}
\begin{equation*}
\left\{
    \begin{aligned}
&H_1(x^1,x^3)=\frac{\displaystyle kk_3cx^1+c_5}{\displaystyle c_1}e^{-kx^3}+c_7\\
&H_2(x^2,x^3)=\frac{\displaystyle kk_3cx^2+c_6}{\displaystyle c_2}e^{-kx^3}+c_8
\end{aligned}
  \right. \ ,
\end{equation*}
and, finally,
\begin{equation*}
\left\{
    \begin{aligned}
&V^1(x^1,x^2,x^3)=\frac{\displaystyle k_0}{\displaystyle c_2}x^2e^{-kx^3}+\frac{\displaystyle kk_3cx^1+c_5}{\displaystyle c_1}e^{-kx^3}+c_7\\
&V^2(x^1,x^2,x^3)=-\frac{\displaystyle k_0}{\displaystyle c_1}x^1e^{-kx^3}+\frac{\displaystyle kk_3cx^2+c_6}{\displaystyle c_2}e^{-kx^3}+c_8\\
&V^3=c
\end{aligned}
  \right. \ .
\end{equation*}
Now, using the 5th and the 6th equations of \eqref{sms}, we find that
$$\left\{
\begin{aligned}
&kc_7=0\\
&kc_8=0
    \end{aligned}
\right. \  .$$

By a direct computation, we find that the converse implication also holds true, hence, the proof is complete.
\end{proof}

\begin{example}
The vector field $V=\sum_{k=1}^3V^kE_k$, where
\begin{equation*}
\left\{
    \begin{aligned}
&V^1(x^1,x^3)=a_1cx^1e^{-a_1x^3}\\
&V^2(x^2,x^3)=a_2cx^2e^{-a_2x^3}\\
&V^3=c
    \end{aligned}
  \right. \ , a_1,a_2, c\in \mathbb R\setminus\{0\}, a_1\neq a_2
\end{equation*}
is a Killing vector field on the biwarped product manifold
$$\left(\mathbb R^3, \ {g}=\frac{1}{f_1^2}dx^1\otimes dx^1+\frac{1}{f_2^2}dx^2\otimes dx^2+dx^3\otimes dx^3\right),$$
for $$f_1(x^3)=e^{a_1x^3}, \ \ f_2(x^3)=e^{a_2x^3}.$$
\end{example}

\begin{example}
The vector field $V=\sum_{k=1}^3V^kE_k$, where
\begin{equation*}
\left\{
    \begin{aligned}
&V^1(x^1,x^2,x^3)=\left(\displaystyle\frac{x^2}{a_2}+\displaystyle\frac{cx^1}{a_1}\right)e^{-x^3}\\
&V^2(x^1,x^2,x^3)=\left(-\displaystyle\frac{x^1}{a_1}+\displaystyle\frac{cx^2}{a_2}\right)e^{-x^3}\\
&V^3=c
    \end{aligned}
  \right. \ , a_1,a_2, c\in \mathbb R\setminus\{0\}, a_1\neq a_2
\end{equation*}
is a Killing vector field on the biwarped product manifold
$$\left(\mathbb R^3, \ {g}=\frac{1}{f_1^2}dx^1\otimes dx^1+\frac{1}{f_2^2}dx^2\otimes dx^2+dx^3\otimes dx^3\right),$$
for $$f_1(x^3)=a_1e^{x^3}, \ \ f_2(x^3)=a_2e^{x^3}.$$
\end{example}

\begin{corollary}\label{co2}
If $f_1=f_2=:f(x^3)$, $f_3=k_3\in \mathbb R\setminus\{0\}$, then $V=\sum_{k=1}^3V^kE_k$ with $V^3=c\in \mathbb R$ is a Killing vector field on $(\mathbb R^3,g)$ if and only if one of the following two assertions holds:

(A) \begin{equation*}
\left\{
    \begin{aligned}
&V^1(x^2,x^3)=\frac{\displaystyle 1}{\displaystyle f(x^3)}(k_0x^2+c_1)\\
&V^2(x^1,x^3)=\frac{\displaystyle 1}{\displaystyle f(x^3)}(-k_0x^1+c_2)\\
&V^3=0
\end{aligned}
  \right. \ , \ c_1,c_2,k_0\in \mathbb R;
\end{equation*}

(B) \begin{equation*}
\left\{
    \begin{aligned}
&V^1(x^1,x^2,x^3)=\frac{\displaystyle 1}{\displaystyle c_0}\left(k_0x^2+k_3c\bar{c}x^1+\hat{c_1}\right)e^{-\bar{c}x^3}+\tilde{c_1}
\\
&V^2(x^1,x^2,x^3)=\frac{\displaystyle 1}{\displaystyle c_0}
\left(-k_0x^1+k_3c\bar{c}x^2+\hat{c_2}\right)e^{-\bar{c}x^3}+\tilde{c_2}
\\
&V^3=c
\end{aligned}
  \right.  \ , \ \bar{c},\hat{c_1},\hat{c_2},\tilde{c_1},\tilde{c_2},k_0\in \mathbb R, c, c_0\in \mathbb R\setminus \{0\},
\end{equation*}
and $f(x^3)=c_0e^{\bar{c}x^3}$, such that
$\bar{c}(\tilde{c_1}^2+\tilde{c_2}^2)=0$.
\end{corollary}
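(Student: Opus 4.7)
The plan is to specialize Theorem \ref{te2} to the hypothesis $f_1=f_2=:f$ and amalgamate the resulting four cases into two. Two observations drive the reduction: first, when $f_1=f_2$ the ratio $f_1/f_2\equiv 1$ is automatically constant, so the factorization hypothesis appearing in case (C) of Theorem \ref{te2} is free; second, the exponential forms $f_i(x^3)=c_i e^{\bar{c}_i x^3}$ of cases (B) and (D) of Theorem \ref{te2}, once restricted by $f_1=f_2$, collapse into a single function $f(x^3)=c_0 e^{\bar{c}x^3}$ with $c_1=c_2=:c_0$ and, in case (B), $\bar{c}_1=\bar{c}_2=:\bar{c}$.

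I would split the analysis according to whether $V^3=0$ or $V^3=c\neq 0$. For $V^3=0$, cases (A) and (C) of Theorem \ref{te2} apply. Setting $f_1=f_2=f$ in (A) yields $V^1=c_1/f$, $V^2=c_2/f$, which is the $k_0=0$ instance of assertion (A) of the corollary. Setting $f_1=f_2=f$ in (C) yields $V^1=(k_0x^2+c_1)/f$, $V^2=(-k_0x^1+c_2)/f$ with $k_0\neq 0$, since the condition $f_1/f_2=\text{const}$ is automatic. Merging the two subcases gives assertion (A) of the corollary with arbitrary $k_0\in\mathbb{R}$.

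For $V^3=c\neq 0$, cases (B) and (D) apply. In (B) the identifications $c_1=c_2=c_0$ and $\bar{c}_1=\bar{c}_2=\bar{c}$ reduce the formulas to $V^j=(k_3c\bar{c}x^j+\hat{c}_j)e^{-\bar{c}x^3}/c_0$ with $V^3=c$, which coincides with the $k_0=0$, $\tilde{c}_1=\tilde{c}_2=0$ instance of assertion (B) of the corollary. In (D), the terms $k_0x^2/(c_2e^{\bar{c}x^3})$ in $V^1$ and $-k_0x^1/(c_1e^{\bar{c}x^3})$ in $V^2$, together with the constants $\tilde{c}_j$ subject to $\bar{c}(\tilde{c}_1^2+\tilde{c}_2^2)=0$, combine with the expressions obtained from the specialization of (B) to yield the full form in assertion (B) of the corollary, now with arbitrary $k_0\in\mathbb{R}$ and the constraint $\bar{c}(\tilde{c}_1^2+\tilde{c}_2^2)=0$ inherited from Theorem \ref{te2}.

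The main concern is bookkeeping: I must verify that the pairwise mergers $\{(\mathrm{A}),(\mathrm{C})\}\to(\mathrm{A})$ and $\{(\mathrm{B}),(\mathrm{D})\}\to(\mathrm{B})$ lose no constraint and introduce no spurious solutions, in particular that the constraint $\bar{c}(\tilde{c}_1^2+\tilde{c}_2^2)=0$ in assertion (B) carries over correctly when the contributions from cases (B) and (D) are superposed. As in the earlier results, the converse implication follows by a direct substitution of each stated form into the system \eqref{sms}, verifying that all six equations are satisfied, which completes the argument.
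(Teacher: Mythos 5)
Your proposal is correct and follows essentially the same route as the paper, which states Corollary \ref{co2} as an immediate specialization of Theorem \ref{te2} to $f_1=f_2=:f$: cases (A) and (C) of the theorem merge into assertion (A) (the constant-ratio condition being automatic), and cases (B) and (D) merge into assertion (B) with $c_1=c_2=:c_0$, $\bar{c}_1=\bar{c}_2=:\bar{c}$, the constraint $\bar{c}(\tilde{c}_1^2+\tilde{c}_2^2)=0$ carrying over and the $k_0=0$ instances covered by the specialized case (B). The bookkeeping you flag indeed checks out, with the only cosmetic redundancy being that for $\bar{c}=0$ the constants $\tilde{c}_j$ can be absorbed into $\hat{c}_j$, so no solutions are lost or spuriously added.
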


\begin{example}
The vector field $V=\sum_{k=1}^3V^kE_k$, where
\begin{equation*}
\left\{
    \begin{aligned}
&V^1(x^1,x^2,x^3)=\displaystyle\frac{1}{k}(cx^1+k_0x^2)e^{-x^3}\\
&V^2(x^1,x^2,x^3)=\displaystyle\frac{1}{k}(cx^2-k_0x^1)e^{-x^3}\\
&V^3=c
    \end{aligned}
  \right. \ , \ c,k\in \mathbb R\setminus\{0\}, k_0\in \mathbb R,
\end{equation*}
is a Killing vector field on the warped product manifold
$$\left(\mathbb R^3, \ {g}=\frac{1}{f^2}(dx^1\otimes dx^1+dx^2\otimes dx^2)+dx^3\otimes dx^3\right),$$
for $$f(x^3)=ke^{x^3}.$$
\end{example}

\begin{example}
The vector field $V=\sum_{k=1}^3V^kE_k$, where
\begin{equation*}
\left\{
    \begin{aligned}
&V^1(x^1,x^2,x^3)=(kcx^1+k_0x^2)e^{-kx^3}\\
&V^2(x^1,x^2,x^3)=(kcx^2-k_0x^1)e^{-kx^3}\\
&V^3=c
    \end{aligned}
  \right. \ , \ c,k\in \mathbb R\setminus\{0\}, k_0\in \mathbb R,
\end{equation*}
is a Killing vector field on the warped product manifold
$$\left(\mathbb R^3, \ {g}=\frac{1}{f^2}(dx^1\otimes dx^1+dx^2\otimes dx^2)+dx^3\otimes dx^3\right),$$
for $$f(x^3)=e^{kx^3}.$$
\end{example}

\textit{Adara M. Blaga}

\textit{Department of Mathematics}

\textit{West University of Timi\c{s}oara}

\textit{Bd. V. P\^{a}rvan 4, 300223, Timi\c{s}oara, Romania}

\textit{adarablaga@yahoo.com}

\end{document}